\theoremstyle{definition}
\newtheorem{lemma}{\normalfont \bfseries Lemma}
\newtheorem{theorem}{\normalfont\bfseries Theorem}
\newtheorem{assumption}{\normalfont\bfseries Assumption}
\newtheorem{definition}{\normalfont\bfseries Definition}
\newtheorem{corollary}{\normalfont\bfseries Corollary}
\newtheorem{remark}{\normalfont\bfseries Remark}
\newcommand{\derp}[2]{\frac{\partial #1}{\partial #2}}
\newcommand{\R}{\mathbb{R}}
\newcommand{\X}{\mathbb{R}^n}
\newcommand{\U}{\mathbb{R}^m}
\newcommand{\Y}{\mathbb{R}^p}
\newcommand{\C}{\mathcal{C}}
\renewcommand{\S}{\mathcal{S}}
\newcommand{\E}{\mathcal{E}}
\newcommand{\K}{\mathcal{K}}
\newcommand{\Ke}{\K^{\rm e}}
\newcommand{\bzero}{\mathbf{0}}
\newcommand{\bx}{\mathbf{x}}
\renewcommand{\bf}{\mathbf{f}}
\newcommand{\bg}{\mathbf{g}}
\newcommand{\bu}{\mathbf{u}}
\newcommand{\bk}{\mathbf{k}}
\newcommand{\by}{\mathbf{y}}
\newcommand{\bkappa}{\boldsymbol{\kappa}}
\newcommand{\bGamma}{\boldsymbol{\Gamma}}
\newcommand{\bb}{\mathbf{b}}
\title{\LARGE \textbf{
Activated Backstepping with Control Barrier Functions \\ for the Safe Navigation of Automated Vehicles
}}
\author{Laszlo Gacsi, Max H. Cohen, and Tamas G. Molnar%
\thanks{*The material contained in this document is based upon work supported by a National Aeronautics and Space Administration (NASA) grant or cooperative agreement. Any opinions, findings, and conclusions or recommendations expressed in this material are those of the author(s) and do not necessarily reflect the views of NASA. This work was supported through a NASA grant awarded to the Kansas NASA EPSCoR Program.}%
\thanks{L. Gacsi and T. G. Molnar are with the Department of Mechanical Engineering, Wichita State University, Wichita, KS 67260, USA,
{\tt\small lxgacsi@shockers.wichita.edu, tamas.molnar@wichita.edu}.}%
\thanks{M. H. Cohen is with the Department of Mechanical and Civil Engineering, California Institute of Technology, Pasadena, CA 91125, USA,
{\tt\small maxcohen@caltech.edu}.}%
\vspace{-1mm}
}
\begin{document}

\maketitle
\thispagestyle{empty}
\pagestyle{empty}

\begin{abstract}
This paper introduces a novel safety-critical control method through the synthesis of control barrier functions (CBFs) for systems with high-relative-degree safety constraints.
By extending the procedure of CBF backstepping, we propose {\em activated backstepping}---a constructive method to synthesize valid CBFs.
The novelty of our method is the incorporation of an activation function into the CBF, which offers less conservative safe sets in the state space than standard CBF backstepping.
We demonstrate the proposed method on an inverted pendulum example, where we explain the underlying geometric meaning in the state space 
and provide comparisons with existing CBF synthesis techniques.
Finally, we implement our method to achieve collision-free navigation for automated vehicles using a kinematic bicycle model in simulation.
\end{abstract}

\section{INTRODUCTION}
\label{sec:intro}

Safety is one of the most important features in many modern control systems, including automated vehicles (AVs).
A promising way to achieve provably safe behavior for these systems is through the use of control barrier functions (CBFs) \cite{AmesXuGriTab2017}. CBFs have a wide range of applications related to the collision-free navigation of AVs.
For example,
longitudinal controllers (like adaptive cruise control) may leverage CBFs to keep the distance between AVs and other vehicles above a safe value \cite{ames2014control}, even in dynamically changing environments \cite{molnar2022safety}, which has also been demonstrated experimentally~\cite{Alan2023AV}.
CBFs 
also
enable safe lateral control for scenarios like obstacle and collision avoidance
\cite{black2023futurefocused, goswami2024collision, haraldsen2024safety, kim2024control}, lane-keeping \cite{jiang2024safety}, lane change~\cite{hu2023safety}, and roundabout crossing~\cite{abduljabbar2021cbfbased}.


To achieve collision-free navigation, AVs must satisfy 
position constraints
using control inputs that 
enter at the acceleration level.
There exist methods to address such high-relative-degree constraints using CBFs.
The first method is \textit{high-order CBFs} \cite{Nguyen2016, xiao2022hocbf}, which became popular due to their 
simplicity,
although they require additional assumptions to obtain a valid CBF.
Another constructive method is \textit{backstepping} \cite{taylor2022safebackstepping, cohen2024constructive, cohen2024reduced}, which 
produces
valid CBFs 
using
an auxiliary, virtual safe controller, but increases the complexity of the control design.
More recently, \cite{ong2024rectified} introduced \emph{rectified CBFs} (ReCBFs) to address high-relative-degree constraints with less complexity. 
ReCBFs incorporate
an activation function into the CBF along with a carefully selected parameter to ensure the Lipschitz continuity of the resulting controller.

The main contribution of this paper is a novel CBF construction that unites the strengths of backstepping \cite{taylor2022safebackstepping,cohen2024constructive} and ReCBFs \cite{ong2024rectified}.
The end result is {\em activated backstepping CBFs (ABCs)}---valid CBFs that offer a larger but still valid safe set compared to that obtained via traditional backstepping, while ensuring the Lipschitz continuity of these controllers
under weaker conditions than those produced by ReCBFs.
We illustrate the benefits of ABCs and compare them with the aforementioned methods using an inverted pendulum example.
Finally, we apply ABCs to the safe navigation of AVs, considering a kinematic bicycle model that endeavors to bypass an obstacle, as shown in Fig.~\ref{fig:firstpage}.

\begin{figure}[t!]
\centering
\includegraphics[width=.47\textwidth]{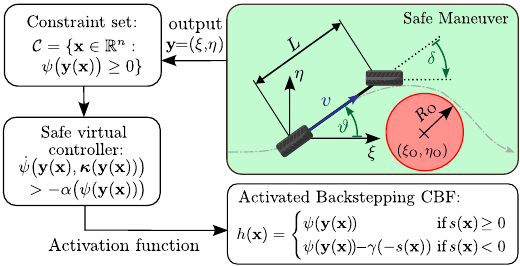}
\vspace{0mm}
\caption{Summary of the proposed safety-critical control framework.}
\vspace{-4mm}
\label{fig:firstpage}
\end{figure}


\section{BACKGROUND}
\label{sec:CBF}

\subsection{Control Barrier Functions}

We consider nonlinear control systems in affine form:
\begin{equation}
    \dot{\bx} = \bf(\bx) + \bg(\bx) \bu,
	\label{eq:system}
\end{equation}
where ${\bx \in \X}$ is the state, ${\bu \in \U}$ is the input, and ${\bf: \X \to \R^n}$, ${\bg: \X \to \R^{n\times m}}$ are smooth\footnote{We say that a function is smooth if it is continuously differentiable as many times as necessary.
For smooth functions ${\alpha : \R \to \R}$, ${h : \X \to \R}$, and ${\by: \X \to \Y}$, we denote the derivative as ${\alpha' : \R \to \R}$, gradient as ${\derp{h}{\bx} : \X \to \X}$, and Jacobian as ${\derp{\by}{\bx} : \X \to \R^{p \times n}}$, respectively.
With a smooth vector field ${\bf: \X \to \X}$, we define the Lie derivatives ${L_{\bf}h(\bx) = \derp{h}{\bx}(\bx) \cdot \bf(\bx)}$ and ${L_{\bf}\by(\bx) = \derp{\by}{\bx}(\bx) \cdot \bf(\bx)}$.} vector and matrix functions, respectively. A locally Lipschitz controller ${\bk: \X \to \U}$, ${\bu = \bk(\bx)}$, leads to the closed-loop system:
\begin{equation}
    \dot{\bx} = \bf(\bx) + \bg(\bx) \bk(\bx).
	\label{eq:closedloop}
\end{equation}

We describe the safety of this system using a {\em safe set} ${\S \subset \X}$ in the state space.
We say that system~\eqref{eq:closedloop} is safe w.r.t.~set $\S$ if for each initial condition ${\bx(0) \in \S}$ the unique solution of~\eqref{eq:closedloop} satisfies ${\bx(t)\in\S}$ for all time.
We define safe sets by a continuously differentiable function ${h : \X \to \R}$:
\begin{equation}
    \S = \{ \bx \in \X : h(\bx) \geq 0 \}.
    \label{eq:safeset}
\end{equation}

\begin{definition}[\cite{AmesXuGriTab2017}]
The function $h$ from \eqref{eq:safeset} is a {\em control barrier function} for~\eqref{eq:system} on $\S$ if there exists an extended class-$\K$ function\footnote{Function ${\alpha : (-b,a) \to \R}$, ${a,b>0}$ is of extended class-$\K$ (${\alpha \in \Ke}$) if it is continuous, strictly increasing, and ${\alpha(0)=0}$.}
${\alpha \in \Ke}$ and an open set $\E\supset\S$ such that:
\begin{equation}
    \sup_{\mathbf{u} \in \U} \dot{h}(\mathbf{x,u}) > - \alpha \big( h(\bx) \big),
\label{eq:CBF_condition}
\end{equation}
for all ${\bx \in \E}$, where ${\dot{h}(\bx,\bu) = L_{\bf}h(\bx) + L_{\bg}h(\bx) \bu}$.
\end{definition}

\begin{theorem}[\cite{AmesXuGriTab2017}] \label{theo:CBF}
\textit{
If $h$ is a CBF for~\eqref{eq:system} on $\S$, then any locally Lipschitz controller ${\bk : \E\ \to \U}$ that satisfies: 
\begin{equation}
    \dot{h} \big( \bx, \mathbf{k(x)} \big) \geq - \alpha \big( h(\bx) \big)
\label{eq:safety_condition}
\end{equation}
for all ${\bx \in \S}$ renders~\eqref{eq:closedloop} safe w.r.t.~$\S$.
}
\end{theorem}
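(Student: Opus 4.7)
The plan is to invoke the comparison lemma applied to the scalar function ${y(t) = h(\bx(t))}$ along any closed-loop trajectory. First, I would fix an initial condition ${\bx(0) \in \S}$ and use local Lipschitz continuity of $\bk$ on the open set ${\E \supset \S}$ to guarantee a unique maximal solution of~\eqref{eq:closedloop} starting from $\bx(0)$. As long as the trajectory remains in $\E$, condition~\eqref{eq:safety_condition} pulled back along the solution yields the scalar differential inequality ${\dot{y}(t) \geq -\alpha(y(t))}$.

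Next, I would compare $y$ to the solution $z$ of the scalar ODE ${\dot{z} = -\alpha(z)}$ with ${z(0) = y(0) = h(\bx(0)) \geq 0}$. Because ${\alpha \in \Ke}$ satisfies ${\alpha(0) = 0}$, the constant function ${z \equiv 0}$ is an equilibrium of this ODE, and uniqueness (or a direct monotonicity argument using ${\dot{z} \leq 0}$ iff ${z \geq 0}$) forces ${z(t) \geq 0}$ for all forward time at which $z$ is defined. Applying the standard comparison lemma then yields ${y(t) \geq z(t) \geq 0}$, i.e., ${h(\bx(t)) \geq 0}$ and hence ${\bx(t) \in \S}$, for every $t$ in the interval on which the trajectory stays in $\E$.

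Finally, I would bootstrap this local-in-time conclusion to the entire forward maximal interval of existence. The key observation is that ${\S \subset \E}$ and $\E$ is open, so by continuity of $\bx(t)$ the set of times at which ${\bx(t) \in \E}$ is an open interval containing $0$; the argument above shows that on this interval the trajectory actually stays in $\S$, so it cannot first leave $\E$ by crossing out of $\S$. A standard connectedness/maximality argument then extends the conclusion to all forward time on which the solution exists, establishing forward invariance of $\S$.

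The main subtlety, in my view, is the final bootstrapping step: carefully chaining ${\S \subset \E}$ with the openness of $\E$ and continuity of the trajectory to rule out the solution escaping $\E$ before violating $h \geq 0$. The rest is a routine application of the comparison lemma, made possible entirely by the fact that extended class-$\K$ functions vanish at the origin.
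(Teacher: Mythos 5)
The paper cites this theorem from \cite{AmesXuGriTab2017} without proving it, so there is no in-paper proof to compare against; your comparison-lemma route is indeed the canonical argument used in the cited reference. There is, however, a domain error in the middle step that makes the proof as written circular. You claim that ``as long as the trajectory remains in $\E$,'' condition~\eqref{eq:safety_condition} gives the scalar inequality $\dot y(t)\ge-\alpha(y(t))$, but \eqref{eq:safety_condition} is only assumed to hold for $\bx\in\S$, not for all $\bx\in\E$. The differential inequality therefore only holds while the trajectory is in $\S$ --- which is precisely the conclusion you are trying to reach --- so the bootstrapping paragraph, which reasons about the trajectory escaping $\E$, is anchoring to the wrong set. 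The standard fix is to define $\tau=\sup\{t\ge0:\bx(s)\in\S\ \forall s\in[0,t]\}$, apply the comparison lemma on $[0,\tau)$ where the inequality is legitimately available, use continuity of $h\circ\bx$ and closedness of $\S$ to get $\bx(\tau)\in\S$, and then argue by maximality (or contradiction) that $\tau$ must be the right endpoint of the forward existence interval. The openness of $\E\supset\S$ is used only to guarantee that solutions exist and are unique in a neighborhood of $\S$, not to extend the domain on which \eqref{eq:safety_condition} holds. Once you rephrase the comparison step in terms of $\S$ rather than $\E$, the rest of your argument goes through.
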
 

Condition~\eqref{eq:safety_condition} facilitates the design of controllers that guarantee safety.
For example, solving the optimization problem:
\begin{equation}
\begin{aligned}
    \bk(\bx) = \underset{\bu \in \U}{\operatorname{argmin}} & \quad \| \bu - \bk_{\rm d}(\bx) \|^2_{\bGamma} \\[-6pt]
    \text{s.t.} & \quad \dot{h}(\bx,\bu) \geq -\alpha \big( h(\bx) \big),
    \label{eq:QP}
\end{aligned}
\end{equation}
produces a safe controller that minimally modifies a
smooth desired controller ${\bk_{\rm d} : \X \to \U}$.
Here, ${\bGamma={\rm diag} \{\Gamma_1, \ldots, \Gamma_m\}}$, ${\Gamma_i>0}$ is a matrix that weighs the input components and ${\|\bu\|^2_{\bGamma} = \bu^\top \bGamma \bu}$.
The optimization problem~\eqref{eq:QP}, also called the {\em safety filter}, is feasible and the controller is locally Lipschitz continuous if $h$ is a CBF with a \emph{strict} inequality~\cite{jankovic2018robust}.
The solution of~\eqref{eq:QP} is~\cite{cohen2023smooth, molnar2025fixedwing}:
\begin{align} \label{eq:safetyfilter}
    \bk(\bx) & = \bk_{\rm d}(\bx) + \lambda \big( a(\bx), \|\bb(\bx)\|^2_{\bGamma} \big) \bb(\bx), \\
    a(\bx) & = \dot{h} \big( \bx, \bk_{\rm d}(\bx) \big) + \alpha \big( h(\bx) \big), \;\;
    \bb(\bx) = \bGamma^{-1} L_{\bg}h(\bx)^\top, \nonumber
\end{align}
with:
\begin{equation}
    \lambda(a,b) =
    \begin{cases}
        0 & \text{if } b \leq 0, \\
        \max\{0,-a/b\} & \text{if } b > 0.
    \end{cases}
    \label{eq:lambda_QP}
\end{equation}
A smooth over-approximation of $\lambda$ in~\eqref{eq:lambda_QP} yields a smooth safe controller in~\eqref{eq:safetyfilter}.
For example, the half-Sontag formula:
\begin{equation}
    \lambda(a,b) =
    \begin{cases}
        0 & \text{if } b = 0, \\
        \frac{-a + \sqrt{a^2 + \sigma b^2}}{2 b} & \text{if } b \neq 0,
    \end{cases}
    \label{eq:lambda_smooth}
\end{equation}
with a smoothing parameter ${\sigma > 0}$, leads to a smooth controller in~\eqref{eq:safetyfilter} that strictly satisfies ${\dot{h} \big( \bx, \bk(\bx) \big) > -\alpha \big( h(\bx) \big)}$; see~\cite{cohen2024reduced, cohen2023smooth,cohen2024constructive}.
Note that $\lambda$ in~\eqref{eq:lambda_QP} is recovered for ${\sigma \to 0}$.

The success of safety-critical control is conditioned on whether $h$ is a valid CBF satisfying~\eqref{eq:CBF_condition}.
The following lemma helps to verify the validity of a candidate CBF.

\begin{lemma}[\cite{jankovic2018robust}]\label{lemma:cbf}
    A function $h$ as in~\eqref{eq:safeset} is a CBF for~\eqref{eq:system} on $\S$ if and only if
    there exists ${\alpha \in \Ke}$ and $\E\supset\S$ such that:
    \begin{equation}\label{eq:Lgh=0}
        L_{\bg}h(\bx) = \bzero \implies L_{\bf}h(\bx) > - \alpha \big( h(\bx) \big),
        \quad \forall \bx \in \E.
    \end{equation}
\end{lemma}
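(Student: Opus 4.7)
The plan is to prove Lemma~\ref{lemma:cbf} as a straightforward equivalence, splitting the state space based on whether $L_{\bg}h(\bx)$ vanishes. The CBF condition~\eqref{eq:CBF_condition} involves the supremum of the affine expression $L_{\bf}h(\bx) + L_{\bg}h(\bx)\bu$ over the unconstrained input set $\U=\R^m$, which is either $L_{\bf}h(\bx)$ itself or $+\infty$. This dichotomy is the whole game.

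For the forward direction ($\Rightarrow$), I would fix $\alpha \in \Ke$ and $\E \supset \S$ for which~\eqref{eq:CBF_condition} holds, and then restrict attention to states $\bx \in \E$ with $L_{\bg}h(\bx) = \bzero$. At such states, the supremum in~\eqref{eq:CBF_condition} collapses to $L_{\bf}h(\bx)$ regardless of $\bu$, so~\eqref{eq:CBF_condition} immediately yields $L_{\bf}h(\bx) > -\alpha(h(\bx))$, which is exactly~\eqref{eq:Lgh=0}.

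For the reverse direction ($\Leftarrow$), I would take the same $\alpha$ and $\E$ and verify~\eqref{eq:CBF_condition} by cases. When $L_{\bg}h(\bx) = \bzero$, the hypothesis~\eqref{eq:Lgh=0} gives $L_{\bf}h(\bx) > -\alpha(h(\bx))$, and since the supremum equals $L_{\bf}h(\bx)$, condition~\eqref{eq:CBF_condition} holds. When $L_{\bg}h(\bx) \neq \bzero$, I would exhibit an explicit sequence (e.g.~$\bu_c = c\, L_{\bg}h(\bx)^\top$ with $c \to \infty$) that drives $L_{\bg}h(\bx)\bu_c = c\|L_{\bg}h(\bx)\|^2$ to $+\infty$, so the supremum is $+\infty$ and~\eqref{eq:CBF_condition} holds trivially.

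The proof has essentially no hard step: the only subtlety is recognizing that the input space is all of $\R^m$, so whenever $L_{\bg}h(\bx)\neq\bzero$ the affine function of $\bu$ is unbounded above and~\eqref{eq:CBF_condition} is automatic. The equivalence is therefore really the statement that the only states where~\eqref{eq:CBF_condition} can fail are those where the control loses its influence on $\dot h$, namely the zero set of $L_{\bg}h$. I would close by remarking that the \emph{strict} inequality in~\eqref{eq:Lgh=0} is what later ensures feasibility and Lipschitz continuity of the safety filter~\eqref{eq:QP}, linking the lemma to its use throughout the paper.
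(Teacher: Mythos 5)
Your proof is correct. The paper does not give its own proof of Lemma~\ref{lemma:cbf}; it states the result with a citation to \cite{jankovic2018robust}. Your argument is the standard one, and it is exactly the content of the lemma: since $\U = \R^m$ is unconstrained, the supremum of the affine map $\bu \mapsto L_{\bf}h(\bx) + L_{\bg}h(\bx)\bu$ is $+\infty$ whenever $L_{\bg}h(\bx)\neq\bzero$ (your explicit ray $\bu_c = c\,L_{\bg}h(\bx)^\top$ demonstrates this), and equals $L_{\bf}h(\bx)$ whenever $L_{\bg}h(\bx)=\bzero$, so the CBF inequality~\eqref{eq:CBF_condition} can only fail on the zero set of $L_{\bg}h$, which is precisely what~\eqref{eq:Lgh=0} rules out. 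Both directions of the equivalence are established by carrying the same $\alpha\in\Ke$ and $\E\supset\S$ across, which is the correct reading of the ``if and only if.'' The only thing I would tighten is a passing mention that $\E$ should be taken open, consistent with the CBF definition in the paper, but this is a cosmetic point rather than a gap.
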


Lemma~\ref{lemma:cbf} implies that $h$ is a CBF if ${L_{\bg}h(\bx) \neq \bzero}$ for all ${\bx \in \E}$.
In such a case, the control input $\bu$ appears in the first time derivative of $h$ for all $\bx\in\E$, and we say that $h$ has {\em relative degree one}.
In contrast, if ${L_{\bg}h(\bx) = \bzero}$ for all ${\bx \in \E}$, then $h$ is not a CBF (unless the system is safe without control, i.e., ${L_{\bf}h(\bx) > - \alpha \big( h(\bx) \big)}$ for all ${\bx \in \E}$).
In this case, we say that $h$ has {\em higher relative degree} because the control input $\bu$ shows up in higher derivatives of $h$.

\subsection{Safety Constraints with High Relative Degree}

We revisit techniques to generate CBFs for higher relative degree cases.
We focus on {\em relative degree two}, although these methods can be recursively applied to relative degrees higher than two.
Relative degree two often occurs in mechanical systems where the safety constraints are on position (or configuration) and the inputs are forces (or torques) affecting the second derivative of position due to Newton's second law.

Consider a smooth output ${\by : \X \to \Y}$ that selects the states (e.g., position) relevant to safety, and a {\em constraint set} $\C$ defined by a smooth {\em constraint function} ${\psi : \Y \to \R}$:
\begin{equation}
    \C = \{\bx \in \X : \psi \big( \by(\bx) \big) \geq 0\}.
    \label{eq:constraintset}
\end{equation}

\begin{assumption} \label{assum:reldeg}
The output $\by$ satisfies:
\begin{equation}
    L_{\bg} \by(\bx) = \bzero, \quad
    {\rm rank} \big( L_{\bg} L_{\bf} \by(\bx)\big) = p,
    \quad \forall \bx \in \E.
\end{equation}
\end{assumption}

Assumption \ref{assum:reldeg} implies that the output $\by$ has relative degree two: the control input $\bu$ shows up in its second time derivative.
We use the following notation for first time derivatives:
\begin{equation}
    \dot{\by}(\bx) = L_{\bf} \by(\bx), \quad\!\!
    \dot{\psi} \big( \by(\bx), \dot{\by}(\bx) \big) = \derp{\psi}{\by}(\by(\bx)) \cdot \dot{\by}(\bx).
\end{equation}
Since $\dot{\psi}$ is independent of the input $\bu$, the constraint function $\psi$ is not a CBF, in general.
The following approaches generate a candidate CBF $h$ from the constraint function $\psi$.

\subsubsection{High-order CBF}

The most popular tool to address high-relative-degree safety constraints is {\em high-order control barrier functions (HOCBFs)}~\cite{Nguyen2016, xiao2022hocbf}, with $h$ defined by:
\begin{equation}
    h(\bx) = \dot{\psi} \big( \by(\bx), \dot{\by}(\bx) \big) + \alpha \big( \psi(\by(\bx)) \big).
    \label{eq:hocbf}
\end{equation}
This provides a simple and efficient way of safety-critical control, as enforcing~\eqref{eq:safety_condition} leads to safety w.r.t.~${\mathcal{S} \cap \mathcal{C}}$; see~\cite{xiao2022hocbf}.
The limitation of HOCBFs is that $h$ in~\eqref{eq:hocbf} is often not a valid CBF satisfying~\eqref{eq:Lgh=0}
when $h$ has a weak relative degree, i.e., if ${L_{\mathbf{g}}h(\bx) =\bzero}$ for some, but not all, ${\bx \in \E}$; see \cite{ong2024rectified}.

\subsubsection{Rectified CBF}
To overcome the limitations of HOCBFs,
~\cite{ong2024rectified} proposed the so-called {\em ReCBF} defined by:
\begin{equation}
\begin{aligned}
    h(\bx) & = \psi(\by(\bx)) - {\rm ReLU} \!\Big(\! -\!\gamma \big( r(\bx) - \varepsilon \big) \!\Big), \\
    r(\bx) & = \dot{\psi} \big( \by(\bx), \dot{\by}(\bx) \big) + \alpha \big( \psi(\by(\bx)) \big),
    \label{eq:ReCBF}
\end{aligned}
\end{equation}
where ${{\rm ReLU}(\cdot)=\max\{\cdot,0\}}$ is the rectified linear unit,
${\gamma \in \Ke}$ is smooth with ${\gamma'(s)=0 \iff s=0}$,
and ${\varepsilon>0}$ is a parameter. Note that $r(\bx)$ in~\eqref{eq:ReCBF} is identical to $h(\bx)$ in~\eqref{eq:hocbf}.
It was shown that $h$ is a CBF satisfying~\eqref{eq:Lgh=0} if:
\begin{equation}
    L_{\bg}L_{\bf}\psi(\bx) \!=\! \bzero \!\implies\! \dot{\psi} \big( \by(\bx), \dot{\by}(\bx) \big) \!+\! \alpha \big( \psi(\by(\bx)) \big) \!\geq\! \varepsilon,
    \label{eq:ReCBF_condition}
\end{equation}
and enforcing~\eqref{eq:safety_condition} guarantees safety w.r.t.~${\S \subset \C}$; see~\cite{ong2024rectified}.
Note that $\varepsilon$ must be carefully selected: if $\varepsilon$ is too large,~\eqref{eq:ReCBF_condition} is violated, and if $\varepsilon$ is too small, the input may chatter (as shown below) since the resulting controller in~\eqref{eq:safetyfilter}-\eqref{eq:lambda_QP} may not be locally Lipschitz continuous for all ${\bx \in \E}$ when ${\varepsilon \to 0}$.

\subsubsection{Backstepping CBF}

An alternative approach for systematically constructing a valid CBF is {\em backstepping}~\cite{taylor2022safebackstepping, cohen2024constructive, cohen2024reduced}.
Backstepping relies on the following assumption.

\begin{assumption} \label{assum:singleint}
    The constraint function $\psi$ satisfies:
    \begin{equation}
        \derp{\psi}{\by}(\by(\bx)) = \bzero \implies \psi(\by(\bx))>0,
        \quad \forall \bx \in \E.
    \end{equation}
\end{assumption}

Assumption~\ref{assum:singleint} implies that $\psi$ is a CBF for the single integrator ${\dot{\by} = \bu_{\by}}$; cf.~\cite[Lem. 1]{cohen2024constructive}.
Backstepping dynamically extends a CBF for a simple system (i.e., $\psi$ for a single integrator) to a CBF for a more complex system~\eqref{eq:system} 
via~\cite{cohen2024constructive}:
\begin{equation}
    h(\bx) = \psi(\by(\bx)) - \frac{1}{2\mu} \big\| \dot{\by}(\bx) - \bkappa(\by(\bx)) \big\|^2, 
    \label{eq:backstepping}
\end{equation}
where ${\mu>0}$ is a parameter, and $\bkappa : \Y \to \R^p$ is a smooth safe virtual controller for the single integrator satisfying:
\begin{equation}
    \dot{\psi} \big( \by(\bx), \bkappa (\by(\bx)) \big) > -\alpha \big( \psi(\by(\bx)) \big), \quad
    \forall \bx \in  \E.
    \label{eq:topdyn}
\end{equation}
Such a controller can be obtained, for example, by applying the smooth safety filter in~\eqref{eq:safetyfilter} and~\eqref{eq:lambda_smooth} for the single integrator.
The {\em backstepping CBF} in~\eqref{eq:backstepping} is a valid CBF that satisfies~\eqref{eq:Lgh=0}, and enforcing~\eqref{eq:safety_condition} ensures safety w.r.t.~${\S \subset \C}$; see~\cite{cohen2024constructive}.
The advantage of backstepping is the validity of the CBF and the continuity properties of the resulting controllers, while its drawback is the additional complexity of computing $\bkappa$ and its derivative to obtain $h$ and $\dot{h}$.


\section{ACTIVATED BACKSTEPPING}
\label{sec:activated}

Although backstepping provides a constructive way to generate valid CBFs, it is often conservative in that the resulting safe set $\S$ may be much smaller than the constraint set $\C$.
To remedy this, we propose {\em activated backstepping}, an approach that yields a larger safe set, and builds on
the framework of ReCBFs~\cite{ong2024rectified}.
Specifically, 
\cite[Ex. 4]{ong2024rectified}
introduced
the main idea behind
activated backstepping 
using a simple example, but did not provide a full characterization of such an approach.
Here, we fill in these gaps by formally establishing the underlying properties of this approach and providing more detailed examples.
The advantage of ABCs over ReCBFs is that they obviate the need to choose $\varepsilon$, producing a valid CBF with chatter-free inputs by construction. 

To enlarge the safe set $\S$, we make an observation.
If
${\dot{\psi} \big( \by(\bx), \dot{\by}(\bx) \big) \geq \dot{\psi} \big( \by(\bx), \bkappa (\by(\bx)) \big)}$, then the output $\by$ evolves safer than the single integrator with controller $\bkappa$ in~\eqref{eq:topdyn}, implying safety w.r.t.~$\C$.
Hence, at these points $\psi(\by(\bx))$ is a perfectly valid CBF.
Thus, we propose the {\em activated backstepping CBF (ABC)}:
\begin{equation}
    h(\bx) =
    \begin{cases}
        \psi(\by(\bx))  & \text{if } s(\bx) \geq 0, \\
        \psi(\by(\bx)) - \gamma(-s(\bx)) & \text{if } s(\bx) < 0,
    \end{cases}
\label{eq:newCBF}
\end{equation}
where ${\gamma \in \Ke}$ is smooth with ${\gamma'(s)=0 \iff s=0}$,
and:
\begin{equation}
\begin{aligned}
    s(\bx) & = \dot{\psi} \big( \by(\bx), \dot{\by}(\bx) \big) - \dot{\psi} \big( \by(\bx), \bkappa (\by(\bx)) \big) \\
    & = \derp{\psi}{\by}(\by(\bx)) \cdot \big( \dot{\by}(\bx) - \bkappa(\by(\bx)) \big).
    \label{eq:switching}
\end{aligned}
\end{equation}
Note that $s(\bx)$ in~\eqref{eq:switching} differs from $r(\bx)$ in~\eqref{eq:ReCBF}.
The term {\em activated} indicates that the proposed CBF~\eqref{eq:newCBF}-\eqref{eq:switching} can be expressed using an activation function, the ReLU, and a rectified extended class-$\K$ function ${\Theta(\cdot)={\rm ReLU}(\gamma(\cdot))}$:
\begin{equation}
    h(\bx) \!=\! \psi(\by(\bx)) - {\rm ReLU}\big( \gamma(-s(\bx)) \big)
    \!=\! \psi(\by(\bx)) - \Theta(-s(\bx)).
    \label{eq:activated}
\end{equation}
For $\gamma$ and $\Theta$, we may choose, e.g., the quadratic function ${\gamma(s) = s^2}$ for ${s > 0}$ and the rectified quadratic unit (ReQU)  ${\Theta(s)={\rm ReQU}(s)}$ (i.e., ${\Theta(s)=s^2}$ if ${s > 0}$ and ${\Theta(s)=0}$ if ${s \leq 0}$), or we may scale to ${\Theta(s)={\rm ReQU}(s)/(2\mu)}$; cf.~\eqref{eq:backstepping}.

Next, we state that the proposed $h$ in \eqref{eq:activated} is a valid CBF.  

\begin{theorem} \label{theo:newCBF}
\textit{
Consider system \eqref{eq:system} with a smooth output ${\by : \X \to \Y}$ satisfying Assumption~\ref{assum:reldeg}, a constraint set ${\C \!\subset\! \X}$ as in~\eqref{eq:constraintset} defined by a smooth function ${\psi : \Y \to \R}$ satisfying Assumption~\ref{assum:singleint}, and a safe set ${\S \subset \X}$ as in~\eqref{eq:safeset} defined by ${h : \X \to \R}$ in \eqref{eq:activated}, where ${\bkappa : \Y \to \R^p}$ is smooth and satisfies \eqref{eq:topdyn}.
Then, $h$ is a CBF for \eqref{eq:system} on ${\S \subset \C}$.
}
\end{theorem}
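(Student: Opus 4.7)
The plan is to verify the two assertions of the theorem separately: the inclusion $\S \subset \C$ and the CBF property of $h$. The inclusion is immediate because $\Theta(\cdot) \geq 0$ by construction, so $h(\bx) \geq 0$ forces $\psi(\by(\bx)) \geq \Theta(-s(\bx)) \geq 0$, i.e., $\bx \in \C$. Before verifying the CBF property I would note that $h$ is continuously differentiable on $\X$: the two branches in \eqref{eq:newCBF} agree at $s=0$ because $\gamma(0)=0$, and their derivatives agree there because $\gamma'(0)=0$, both coming from the standing properties of $\gamma$.

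To invoke Lemma~\ref{lemma:cbf}, I would keep the same extended class-$\K$ function $\alpha$ that appears in \eqref{eq:topdyn} and split the argument by the sign of $s(\bx)$. In the region $s(\bx) \geq 0$, $h$ coincides with $\psi \circ \by$, so Assumption~\ref{assum:reldeg} (specifically $L_{\bg}\by = \bzero$) yields $L_{\bg}h(\bx) = \derp{\psi}{\by}(\by(\bx)) L_{\bg}\by(\bx) = \bzero$ identically. The required Lie-derivative inequality $L_{\bf}h(\bx) > -\alpha(h(\bx))$ then reduces to $\dot{\psi}(\by(\bx),\dot{\by}(\bx)) > -\alpha(\psi(\by(\bx)))$, which follows by chaining $\dot{\psi}(\by(\bx),\dot{\by}(\bx)) \geq \dot{\psi}(\by(\bx),\bkappa(\by(\bx)))$ (the very definition of $s(\bx) \geq 0$) with \eqref{eq:topdyn}.

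In the region $s(\bx) < 0$, I would rule out the antecedent of Lemma~\ref{lemma:cbf} altogether by showing $L_{\bg}h \neq \bzero$ throughout. Differentiating $h = \psi(\by) - \gamma(-s)$ by the chain rule, $L_{\bg}[\psi(\by)]$ vanishes by Assumption~\ref{assum:reldeg}, and when one differentiates $s = \derp{\psi}{\by}(\by) \cdot (\dot{\by} - \bkappa(\by))$ against $\bg$, the Hessian-of-$\psi$ term and the $\derp{\bkappa}{\by}$ term are again killed by $L_{\bg}\by = \bzero$. What survives is $L_{\bg}h(\bx) = \gamma'(-s(\bx)) \derp{\psi}{\by}(\by(\bx)) L_{\bg}L_{\bf}\by(\bx)$. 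The scalar factor $\gamma'(-s(\bx))$ is nonzero because $s(\bx) \neq 0$, and the full-row-rank hypothesis on $L_{\bg}L_{\bf}\by$ from Assumption~\ref{assum:reldeg} makes its left null space trivial; hence $L_{\bg}h(\bx) = \bzero$ would force $\derp{\psi}{\by}(\by(\bx)) = \bzero$ and thus $s(\bx) = 0$, contradicting $s(\bx) < 0$.

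The main obstacle I anticipate is the chain-rule computation in this second case: keeping straight which terms in $\derp{s}{\bx}\bg$ drop out and which survive, and then combining the rank hypothesis in Assumption~\ref{assum:reldeg} with an implicit appeal to Assumption~\ref{assum:singleint} (via the contradiction with $s<0$) to exclude $L_{\bg}h = \bzero$. Everything else---the set inclusion, the $C^1$ regularity across $s=0$, and the case $s \geq 0$ inequality---follows essentially formally, with the $\alpha$ from \eqref{eq:topdyn} carrying through unchanged.
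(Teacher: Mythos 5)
Your proposal is correct and follows essentially the same route as the paper: establish that $L_{\bg}h(\bx) = \bzero$ precisely when $s(\bx) \geq 0$ (the paper phrases this as the equivalence \eqref{eq:activated_Lgh_vs_s}, you as a two-case split), and there verify the hypothesis of Lemma~\ref{lemma:cbf} via the chain $L_{\bf}h = \dot{\psi}(\by,\dot{\by}) \geq \dot{\psi}(\by,\bkappa(\by)) > -\alpha(\psi(\by)) = -\alpha(h)$ using \eqref{eq:topdyn}. One small misattribution worth fixing: the contradiction you use to exclude $L_{\bg}h = \bzero$ when $s<0$ (namely $\derp{\psi}{\by}(\by) = \bzero \Rightarrow s = 0$) comes directly from the formula for $s$ in \eqref{eq:switching}, not from Assumption~\ref{assum:singleint}, which enters only through guaranteeing the existence of a $\bkappa$ satisfying \eqref{eq:topdyn}.
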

\begin{proof}
First, we establish that ${\Theta(\cdot)={\rm ReLU}(\gamma(\cdot))}$ in~\eqref{eq:activated}
is continuously differentiable. Since ${\gamma \in \Ke}$ is smooth with ${\gamma'(0)=0}$, we have ${\Theta'(s) = \gamma'(s)>0}$ for ${s>0}$, ${\Theta'(s)=0}$ for ${s<0}$, and ${\Theta'(0)=0}$ for both the left and right derivatives.
Furthermore, we have ${\Theta(-s) \geq 0}$ for all ${s \in \R}$ and:
\begin{equation}
    \Theta(-s)=0 \iff
    s \geq 0 \iff
    \Theta'(-s)=0,
    \label{eq:theta_prop}
\end{equation}
due to the ReLU, ${\gamma \in \Ke}$, and ${\gamma'(s)=0 \!\iff\! s=0}$.
Because $\psi$, $\by$, and $s$ are smooth and $\Theta$ is continuously differentiable, $h$ in~\eqref{eq:newCBF} is also continuously differentiable.
Since ${\Theta(-s) \geq 0}$, we have that ${h(\bx) \leq \psi(\by(\bx))}$ for all ${\bx \in \E}$, thus ${h(\bx) \geq 0 \implies \psi(\by(\bx)) \geq 0}$ and ${\S \subset \C}$.

\begin{figure*}
\centering
\includegraphics[width=1\textwidth]{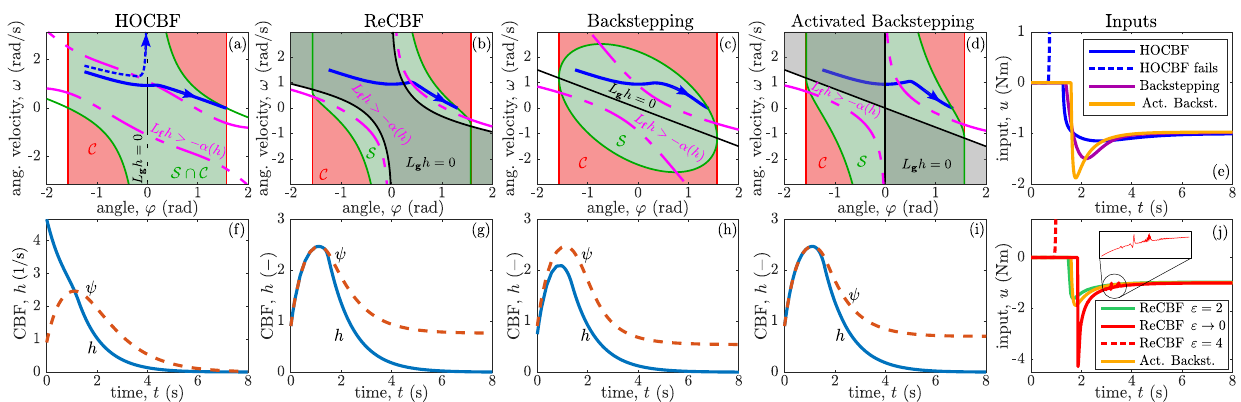}
\vspace{-7mm}
\caption{
Simulation of the inverted pendulum~\eqref{eq:pendulum} using controller~\eqref{eq:safetyfilter}-\eqref{eq:lambda_QP} with:
(a,f) high-order CBF~\eqref{eq:pend_hocbf},
(b,g) rectified CBF~\eqref{eq:pend_recbf},
(c,h) backstepping CBF~\eqref{eq:pend_backstepping}, and
(d,i) the proposed activated backstepping CBF~\eqref{eq:pend_activated}.
While the trajectories are safe in each case, the HOCBF results in infeasible control inputs for some initial conditions (e).
Compared to backstepping, activated backstepping gives a larger (unbounded) safe set at the price of slightly larger and sharper input.
Compared to the ReCBF, activated backstepping provides less sharp, chatter-free input without need for careful parameter tuning (f).
}
\vspace{-5mm}
\label{fig:pendulum}
\end{figure*}

We now prove that $h$ is a CBF using Lemma~\ref{lemma:cbf} by showing that \eqref{eq:Lgh=0} holds.
We identify $L_{\bf}h$ and $L_{\bg}h$ in~\eqref{eq:Lgh=0} by calculating the derivative of $h$ in~\eqref{eq:activated} along~\eqref{eq:system}:
\begin{equation}
\begin{aligned}
    \dot{h}(\bx,\bu)
    = \underbrace{\dot{\psi}(\by,\dot{\by}) + \Theta'(-s) L_{\bf}s}_{L_{\bf}h} + \underbrace{\Theta'(-s) L_{\bg}s}_{L_{\bg}h} \bu,
    \label{eq:hdot_activated}
\end{aligned}
\end{equation}
where functional dependencies on $\bx$ have been suppressed for ease of presentation.
By differentiating~\eqref{eq:switching} we have:
\begin{equation}
\begin{aligned}
    L_{\bf}s & = \derp{}{\by} \bigg(
    \derp{\psi}{\by}(\by) \cdot \big( \dot{\by} - \bkappa(\by) \big)
    \!\bigg) \!\cdot\! \dot{\by}
    + \derp{\psi}{\by}(\by) \!\cdot\! L_{\bf}^2\by, \\
    L_{\bg} s & = \derp{\psi}{\by}(\by) \cdot L_{\bg}L_{\bf}\by,
    \label{eq:Lfs_Lgs}
\end{aligned}
\end{equation}
with ${L_{\bf}^2\by = L_{\bf}\dot{\by}}$, ${L_{\bg}L_{\bf}\by = L_{\bg}\dot{\by}}$.
Per Assumption~\ref{assum:reldeg},
the left nullspace of $L_{\bg}L_{\bf}\by$ is the zero vector,
and~\eqref{eq:hdot_activated}-\eqref{eq:Lfs_Lgs} imply:
\begin{equation}
    L_{\bg}h = \bzero \iff \Theta'(-s) = 0\ \text{ or }\ \derp{\psi}{\by}(\by) = \bzero .
\end{equation}
If ${\derp{\psi}{\by}(\by) = \bzero}$,~\eqref{eq:switching} gives ${s=0}$, which again leads to ${\Theta'(-s) = 0}$.
Therefore, we obtain:
\begin{equation}
    L_{\bg}h = \bzero \iff \Theta'(-s) = 0 \iff s \geq 0,
    \label{eq:activated_Lgh_vs_s}
\end{equation}
where we also used~\eqref{eq:theta_prop}.
If ${\Theta'(-s) = 0}$ and ${s \geq 0}$:
\begin{equation}
\begin{aligned}
    \!\!L_{\bf}h = \dot{\psi}(\by,\dot{\by})
    \geq \dot{\psi}(\by,\bkappa(\by))
    > -\alpha(\psi(\by))
    = -\alpha(h),
\end{aligned}
\end{equation}
where we substituted ${\Theta'(-s) = 0}$ into $L_{\bf}h$ in~\eqref{eq:hdot_activated};
we used ${s \geq 0}$ and~\eqref{eq:switching};
we applied~\eqref{eq:topdyn};
and we used ${s \geq 0}$ and~\eqref{eq:newCBF}.
Thus,~\eqref{eq:Lgh=0} holds and Lemma~\ref{lemma:cbf} completes the proof.
\end{proof}

\begin{corollary}
\textit{
Consider $h$ in~\eqref{eq:activated}.
Any locally Lipschitz controller ${\bk : \E \to \U}$ that satisfies~\eqref{eq:safety_condition} for all ${\bx \in \S}$ renders~\eqref{eq:closedloop} safe w.r.t.~${\S \subset \C}$.
}
\end{corollary}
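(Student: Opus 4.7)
The plan is to observe that this corollary is essentially an immediate combination of Theorem~\ref{theo:newCBF} with the standard CBF safety result stated in Theorem~\ref{theo:CBF}. The only content beyond routine citation is the safe set containment ${\S \subset \C}$, which has already been established as part of the proof of Theorem~\ref{theo:newCBF}.

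First, I would invoke Theorem~\ref{theo:newCBF} to conclude that $h$ defined in~\eqref{eq:activated} is a valid CBF for system~\eqref{eq:system} on $\S$, under the standing assumptions (Assumptions~\ref{assum:reldeg}--\ref{assum:singleint} and the virtual controller $\bkappa$ satisfying~\eqref{eq:topdyn}). These assumptions are inherited from the setup of Theorem~\ref{theo:newCBF} and are implicitly carried into the corollary via the construction of $h$ in~\eqref{eq:activated}. Next, since $h$ is a CBF, any locally Lipschitz ${\bk : \E \to \U}$ satisfying~\eqref{eq:safety_condition} pointwise on $\S$ renders the closed-loop system~\eqref{eq:closedloop} safe with respect to $\S$ by a direct application of Theorem~\ref{theo:CBF}.

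Finally, I would recall the chain of inequalities from the proof of Theorem~\ref{theo:newCBF}: because ${\Theta(-s(\bx)) \geq 0}$ for all ${\bx \in \E}$, we have ${h(\bx) \leq \psi(\by(\bx))}$, and therefore ${h(\bx) \geq 0}$ implies ${\psi(\by(\bx)) \geq 0}$, i.e., ${\S \subset \C}$. Combining this inclusion with the safety of trajectories with respect to $\S$ yields safety with respect to ${\S \subset \C}$, completing the corollary.

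I do not anticipate any real obstacle here: the corollary is a clean packaging of Theorems~\ref{theo:CBF} and~\ref{theo:newCBF}. The only subtlety worth flagging is that the controller is assumed locally Lipschitz only on $\E$ (not necessarily smooth), so the smooth safety filter of~\eqref{eq:safetyfilter}--\eqref{eq:lambda_smooth} is one instance of such a $\bk$, but the statement covers the broader class including the (possibly non-smooth) QP controller in~\eqref{eq:lambda_QP} wherever it remains locally Lipschitz.
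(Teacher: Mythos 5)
Your proposal is correct and takes exactly the same route as the paper, which simply cites Theorems~\ref{theo:CBF} and~\ref{theo:newCBF} directly; you unpack the citation by recalling that Theorem~\ref{theo:newCBF} establishes both that $h$ is a CBF and that $\S \subset \C$, while Theorem~\ref{theo:CBF} then gives forward invariance of $\S$. Nothing is missing or different in substance.
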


\begin{proof}
This is a direct consequence of Theorems~\ref{theo:CBF}-\ref{theo:newCBF}.
\end{proof}

\begin{remark} \label{rem:comparison}
The advantage of activated backstepping over the standard backstepping is twofold.
First, because $h$ differs from $\psi$ in~\eqref{eq:newCBF} only at safety-critical states (when ${s(\bx)<0}$), the size of the resulting safe set $\S$ becomes larger
(see the example below).
Second, when $h$ matches $\psi$ (when ${s(\bx) \geq 0}$), significant computations can be saved.
Specially, one does not need to calculate $\dot{h}$ and the derivative of $\bkappa$, as the safety filter in~\eqref{eq:safetyfilter}-\eqref{eq:lambda_QP} gives ${\bk(\bx)=\bk_{\rm d}(\bx)}$.
The ABC in~\eqref{eq:activated} also offers an advantage over the ReCBF in~\eqref{eq:ReCBF}: it strictly satisfies~\eqref{eq:CBF_condition}, i.e., ${\sup_{\mathbf{u} \in \U} \dot{h}(\bx,\bu) > -\alpha(h(\bx))}$, rather than ${\sup_{\mathbf{u} \in \U} \dot{h}(\bx,\bu) \geq -\alpha(h(\bx))}$ that holds for ReCBFs when ${\varepsilon = 0}$~\cite{ong2024rectified}.
This subtlety is important for ensuring the Lipschitz continuity of safe controllers like~\eqref{eq:QP}~\cite{jankovic2018robust}.
For ${\varepsilon=0}$, ReCBFs could yield discontinuous inputs with chatter (frequent jumps), and for small ${\varepsilon>0}$, the inputs may change rapidly (see the example below).
Thus, carefully selecting $\varepsilon$ is important.
ABCs eliminate the challenge of tuning $\varepsilon$, at the price of additional complexity of designing controller $\bkappa$.
\end{remark}

\begin{figure*}
\centering
\includegraphics[width=.95\textwidth]{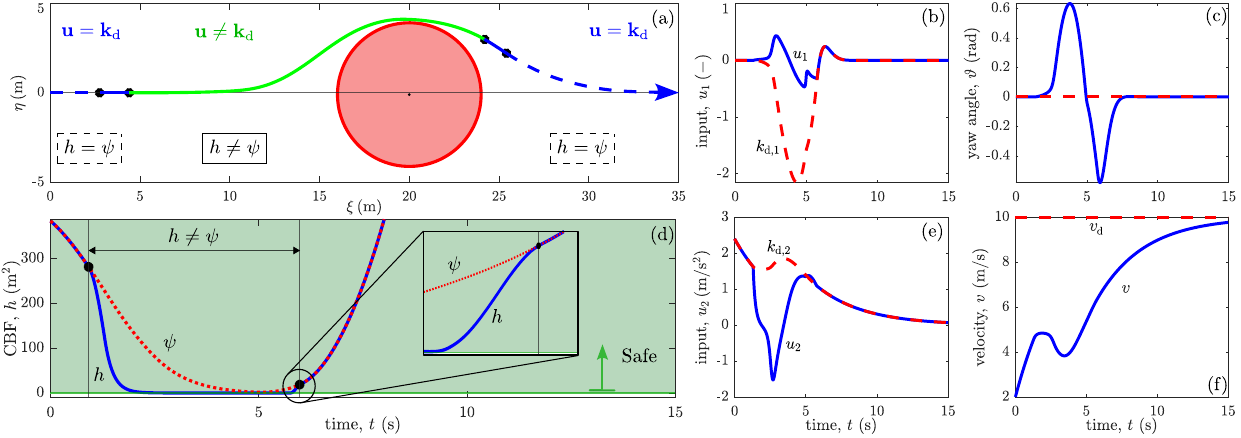}
\vspace{-3mm}
\caption{
Simulation of the kinematic bicycle model~\eqref{eq:bicycle} using controller~\eqref{eq:safetyfilter}-\eqref{eq:lambda_QP} with the proposed activated backstepping CBF~\eqref{eq:activated}.
The vehicle successfully bypasses an obstacle with desired, safe behavior.
Codes are available at https://github.com/LaszloGacsi/activated-backstepping-CBF.}

\vspace{-5mm}
\label{fig:bicycle}
\end{figure*}

\subsection{Illustrative Example}
We first demonstrate the utility of activated backstepping using an inverted pendulum example, as in~\cite{Alan2023AV}, and compare it with the other methods.
The equations of motion are:  
\begin{equation}
    \underbrace{\begin{bmatrix}
        \dot{\varphi} \\
        \dot{\omega}
    \end{bmatrix}}_{\dot{\bx}} =
    \underbrace{\begin{bmatrix}
        \omega \\
        \sin(\varphi)
    \end{bmatrix}}_{\bf(\bx)} +
    \underbrace{\begin{bmatrix}
        0 \\ 1
    \end{bmatrix}}_{\bg(\bx)} u,
    \label{eq:pendulum}
\end{equation}
where the state $\bx=(\varphi, \omega)$ includes the angle $\varphi$ measured from the upright position and the angular velocity $\omega$, while the input $u$ is a torque.
For safety, we keep the pendulum above the horizontal position so that ${-\pi/2 \leq \varphi \leq \pi/2}$.
This is captured by the output ${y(\bx) = \varphi}$ and constraint function:
\begin{equation}
    \psi(\varphi) = \frac{\pi^2}{4} - \varphi^2.
\end{equation}
Since
${\dot{y}(\bx) = L_{\bf} y(\bx) = \omega}$,
${L_{\bg} y(\bx) = 0}$,
and ${L_{\bg}L_{\bf}y(\bx) = 1}$, $y$ has relative degree two and satisfies Assumption~\ref{assum:reldeg}. 
Furthermore, $\psi$ satisfies Assumption~\ref{assum:singleint} for ${\E=\R^2}$, because ${\derp{\psi}{y}(\varphi)=-2\varphi=0}$ holds for ${\varphi=0}$ and ${\psi(0)=\pi^2/4>0}$.

\subsubsection{High-order CBF}

The HOCBF in~\eqref{eq:hocbf} becomes:
\begin{equation}
    h(\bx) = -2 \varphi \omega + \alpha \bigg( \frac{\pi^2}{4} - \varphi^2 \bigg),
    \label{eq:pend_hocbf}
\end{equation}
with ${L_{\bf} h(\bx) = -2 \omega^2 - 2 \varphi \sin(\varphi) - 2 \alpha'(\pi^2/4-\varphi^2) \varphi \omega}$ and
${L_{\bg} h(\bx) = - 2 \varphi}$. If ${L_{\bg} h(\bx) = 0}$, we have ${\varphi = 0}$, leading to ${L_{\bf} h(\bx) = -2 \omega^2}$, and ${h(\bx) = \alpha(\pi^2/4)}$.
Thus,~\eqref{eq:Lgh=0} does not hold for states where ${\omega^2 \geq \alpha(\alpha(\pi^2/4))/2}$.
Therefore, based on Lemma~\ref{lemma:cbf}, this choice of $h$ is not a CBF.


\subsubsection{Rectified CBF}

The ReCBF in~\eqref{eq:ReCBF} takes the form:
\begin{equation}
    h(\bx) \!=\! \frac{\pi^2}{4} - \varphi^2 - \frac{1}{2 \mu} {\rm ReQU} \bigg( \!2 \varphi \omega - \alpha \bigg(\! \frac{\pi^2}{4} - \varphi^2 \!\bigg) + \varepsilon \!\bigg)\!,
    \label{eq:pend_recbf}
\end{equation}
with the choice ${\gamma(s) = -s^2/(2\mu)}$ for ${s < 0}$.
Based on~\eqref{eq:ReCBF_condition}, this is a CBF if ${\alpha(\pi^2/4) \geq \varepsilon}$, putting an upper bound on $\varepsilon$.
\subsubsection{Backstepping CBF}
The first step of this method is to design the virtual controller $\kappa$ so as to ensure~\eqref{eq:topdyn}:
\begin{equation}
    - 2 \varphi \kappa(\varphi) > -\alpha \bigg( \frac{\pi^2}{4} - \varphi^2 \bigg).
\end{equation}
For example, we can choose ${\kappa(\varphi) = -K \varphi}$ and ${\alpha(r) = \alpha_{\rm c} r}$ with ${2 K \geq \alpha_{\rm c} > 0}$.
With this, the backstepping CBF~\eqref{eq:backstepping} is:
\begin{equation}
    h(\bx) = \frac{\pi^2}{4} - \varphi^2 - \frac{1}{2\mu} \big( \omega + K \varphi \big)^2.
\label{eq:pend_backstepping}
\end{equation}
The corresponding safe set $\S$ is a rotated ellipse in the state space.
Elliptical safe sets have previously been constructed by intuition for the inverted pendulum~\cite{Alan2023AV} and also for lane keeping control~\cite{jiang2024safety}---these can also be viewed as a special case of backstepping with a linear virtual controller $\kappa$. 

\subsubsection{Activated Backstepping CBF}
We use the
ABC in~\eqref{eq:newCBF} with ${\gamma(s) = s^2/(2\mu)}$ for ${s > 0}$:
\begin{equation}
\begin{aligned}
    h(\bx) & = 
    \begin{cases}
        \frac{\pi^2}{4}-\varphi^2
        & \text{if } s(\bx) \geq 0, \\
        \frac{\pi^2}{4}-\varphi^2 - \frac{s(\bx)^2}{2\mu}
        & \text{if } s(\bx) < 0, \\
    \end{cases} \\
    s(\bx) & = -2\varphi (\omega+K\varphi),
\label{eq:pend_activated_switching}
\end{aligned}
\end{equation}
or, more compactly, as in~\eqref{eq:activated}, with ${\Theta(s) = {\rm ReQU}(s)/(2\mu)}$:
\begin{equation}
    h(\bx) = \frac{\pi^2}{4} - \varphi^2 - \frac{1}{2 \mu} {\rm ReQU} \big( 2\varphi (\omega+K\varphi) \big).
\label{eq:pend_activated}
\end{equation}
Notice the difference between the standard backstepping in~\eqref{eq:pend_backstepping} and the activated version in~\eqref{eq:pend_activated_switching}: we incorporated the switching function $s$ that includes a factor $\derp{\psi}{y}(y(\bx)) = -2 \varphi$.

Figure~\ref{fig:pendulum} shows numerical simulation results for the inverted pendulum~\eqref{eq:pendulum} with the safety-critical controller~\eqref{eq:safetyfilter}-\eqref{eq:lambda_QP} using: (a,f) HOCBF~\eqref{eq:pend_hocbf}, (b,g) ReCBF~\eqref{eq:pend_recbf}, (c,h) backstepping CBF~\eqref{eq:pend_backstepping}, and
(d,i) ABC~\eqref{eq:pend_activated}.
The pendulum starts moving from the same initial condition in each case, the desired input is ${k_{\rm d}(\bx) = 0}$, and the CBF must keep the angle within ${-\pi/2 \leq \varphi \leq \pi/2}$.
The extended class-$\K$ function of each CBF is linear, ${\alpha(r) = \alpha_{\rm c} r}$, and the parameters are:
${\alpha_{\rm c} = 1\,{\rm s^{-1}}}$ and ${\Gamma=1}$ for all CBFs,
${\varepsilon=2\,{\rm s^{-1}}}$ for ReCBF,
${K = 0.75\,{\rm s^{-1}}}$ for both backstepping methods,
${\mu = 1.5\,{\rm s^{-2}}}$ for backstepping CBF, and 
${\mu = 5\,{\rm s^{-2}}}$ for ABC.

Figure~\ref{fig:pendulum}(a,b,c,d) show the phase plane with the constraint set $\C$ (red), safe set $\S$ (green), and simulated trajectory (thick blue).
All four cases are safe: the trajectories stay inside ${\S \cap \C}$ for the HOCBF and ${\S \subset \C}$ for ReCBF, backstepping CBF, and ABC.
Correspondingly, the CBF $h$ and constraint function $\psi$ in Fig.~\ref{fig:pendulum}(f,g,h,i) are positive for all time.

Figure~\ref{fig:pendulum}(e,j) compare the control inputs of the four methods.
The backstepping input (purple) has a larger magnitude than that of the HOCBF (blue), while the activated backstepping input (orange) is slightly larger and changes more sharply.
The input of the ReCBF with ${\varepsilon=2\,{\rm s^{-1}}}$ (green) is similar to that of the ABC.
However, selecting $\varepsilon$ too large, such as ${\varepsilon=4\,{\rm s^{-1}}}$, violates~\eqref{eq:ReCBF_condition} (i.e., ${\alpha(\pi^2/4) < \varepsilon}$), thus $h$ is not a valid CBF, and the input blows up (red dashed).
If one chooses a too small ${\varepsilon \approx 0}$ value, such as ${\varepsilon=0.01\,{\rm s^{-1}}}$, then the input has a sharp change (jump) at ${t \approx 2\,{\rm s}}$ and chatters at ${t \in [3,4]\,{\rm s}}$ (red), which become more pronounced for even smaller $\varepsilon$, underscoring the challenges in tuning $\varepsilon$ for ReCBFs;
see~Remark~\ref{rem:comparison}.

Figure~\ref{fig:pendulum}(a,b,c,d) also show the singular region ${L_{\bg}h(\bx) \!=\! 0}$ (thin black) and switching line ${L_{\bf}h(\bx) + \alpha(h(\bx)) = 0}$ where the controller starts to deviate from the desired ${k_{\rm d}(\bx) = 0}$ (dash-dot magenta); cf.~\eqref{eq:safetyfilter}-\eqref{eq:lambda_QP}.
A CBF is valid if the singular region ${L_{\bg}h(\bx) = 0}$ lies between the switching lines where ${L_{\bf}h(\bx) > - \alpha(h(\bx))}$; see~\eqref{eq:Lgh=0}.
This is true for ReCBF, backstepping CBF, and ABC, but not for the HOCBF that violates~\eqref{eq:Lgh=0} at high angular velocities (i.e., at ${\omega^2 \geq \alpha(\alpha(\pi^2/4))/2}$ shown by the dotted black line).
Along trajectories with high angular velocity (like the thick dashed blue line in Fig.~\ref{fig:pendulum}(a)), the control input blows up (see the thick dashed blue line in Fig.~\ref{fig:pendulum}(e)), which prevents the implementation of the HOCBF.

For backstepping in Fig.~\ref{fig:pendulum}(c), the safe set $\S$ is compact (an ellipse).
For activated backstepping in Fig.~\ref{fig:pendulum}(d), $\S$ is unbounded, and the boundaries of $\S$ and $\C$ coincide in the top left and bottom right quadrants (black) where the angular velocity is safe (i.e., where the pendulum moves towards the upright position).
Thus, ${h(\bx) = \psi(y(\bx))}$ in Fig.~\ref{fig:pendulum}(i) at the beginning of the simulation when the trajectory is in the top left quadrant in Fig.~\ref{fig:pendulum}(d).
This corresponds to ${s(\bx) \geq 0}$ according to~\eqref{eq:pend_activated_switching}, 
and also to ${L_{\bg}h(\bx)=0}$ based on~\eqref{eq:activated_Lgh_vs_s}.

Overall, activated backstepping yields a valid CBF 
under weaker conditions than
the HOCBF method.
The ABC aims to minimize the difference between the safe set and the constraint set, yielding a larger safe set than traditional backstepping, at the price of a slightly larger and sharper input.
Furthermore, the ABC generates a Lipschitz continuous controller without jumps or chatter, a phenomenon that may occur for ReCBFs if $\varepsilon$ is not chosen carefully.

\section{APPLICATION TO SAFE VEHICLE CONTROL}
\label{sec:application}

Finally, we implement activated backstepping for the safe navigation of automated vehicles.
We design safe controllers for obstacle avoidance using a kinematic bicycle model \cite{haraldsen2024safety}:
\begin{align}
    \underbrace{\begin{bmatrix}
        \dot{\xi} \\
        \dot{\eta} \\
        \dot{\vartheta} \\
        \dot{v}
    \end{bmatrix}}_{\dot{\bx}}
    = \underbrace{\begin{bmatrix}
        v\cos(\vartheta) \\
        v\sin(\vartheta) \\
        0 \\
        0
    \end{bmatrix}}_{\bf(\bx)}
    + \underbrace{\begin{bmatrix}
        0 & 0 \\
        0 & 0 \\
        v/L & 0 \\
        0 & 1
    \end{bmatrix}}_{\bg(\bx)}
    \underbrace{\begin{bmatrix}
        u_1 \\
        u_2
    \end{bmatrix}}_{\bu},
    \label{eq:bicycle}
\end{align}
where $L$ is the wheelbase.
The vehicle has four states: rear wheel position ($\xi$, $\eta$), yaw angle $\vartheta$, and rear wheel speed $v$; while it has two control inputs: tangent of steering angle $u_1$ and longitudinal acceleration $u_2$.
We assume that ${v \neq 0}$.

As shown in Fig.~\ref{fig:bicycle}(a), our goal is to drive the vehicle with desired velocity ${v_{\rm d}>0}$ along a lane at ${\eta=0}$, and then safely bypass a circular obstacle of radius $R_{\rm O}$ centered at ($\xi_{\rm O}$, $\eta_{\rm O}$).
Collision-free behavior is captured by the output coordinates $\by(\bx)=(\xi,\eta)$ and constraint function: 
\begin{equation}
    \psi(\by(\bx)) = (\xi - \xi_{\rm O})^2 + (\eta - \eta_{\rm O})^2 - R_{\rm O}^2,
\end{equation}
which satisfy Assumption~\ref{assum:reldeg} if ${v \neq 0}$ and Assumption~\ref{assum:singleint} for ${\E=\{ \bx \in \X: (\xi,\eta) \neq (\xi_{\rm O},\eta_{\rm O})\}}$.

We design a safe controller based on the
ABC~\eqref{eq:activated}.
First, we construct a virtual controller $\bkappa$ satisfying~\eqref{eq:topdyn}.
While this is more challenging than in the previous example, it can be done algorithmically using the smooth safety filter in~\eqref{eq:safetyfilter} and~\eqref{eq:lambda_smooth} for the single integrator (with weights ${\hat{\Gamma}_1 \!=\! \hat{\Gamma}_2 \!=\! 1}$):
\begin{equation}
\begin{aligned}
    & \bkappa(\by) = \bkappa_{\rm d}(\by) + \lambda \big( a(\by), \|\bb(\by)\|^2 \big) \bb(\by),\\
    & a(\by) = \dot{\psi} \big( \by, \bkappa_{\rm d}(\by) \big) + \hat{\alpha}_{\rm c} \psi(\by), \quad
    \bb(\by) = \derp{\psi}{\by}(\by),
    \label{eq:obst_av_k0}
\end{aligned}
\end{equation}
where ${\bkappa_{\rm d}(\by)=(\hat{v}_{\rm d}, 0)}$ is chosen to be a constant velocity with ${\hat{v}_{\rm d}>0}$.
Then, we use the proposed CBF~\eqref{eq:activated} with ${\Theta(s) = {\rm ReQU}(s)/(2\mu)}$, and synthesize a safe controller via the safety filter~\eqref{eq:safetyfilter}-\eqref{eq:lambda_QP} with ${\alpha(r) = \alpha_{\rm c} r}$.
We set the desired controller to be a lane-keeping controller inspired by \cite{jiang2024safety}:
\begin{equation}
    \bk_{\rm d}(\bx)=\begin{bmatrix}
        -K_{\eta}\eta - K_{\vartheta}\sin(\vartheta)\\
        K_v(v_{\rm d}-v)
    \end{bmatrix}.
\end{equation}
with gains ${K_{\eta}, K_{\vartheta}, K_v > 0}$.

Figure~\ref{fig:bicycle} shows simulations of the vehicle~\eqref{eq:bicycle} executing controller~\eqref{eq:safetyfilter}-\eqref{eq:lambda_QP} with the proposed CBF~\eqref{eq:activated} and the parameters in Table~\ref{tab:parameters}.
The trajectory in Fig.~\ref{fig:bicycle}(a) shows that the vehicle successfully bypasses the obstacle without collision, while the positive signs of $\psi$ and $h$ in Fig.~\ref{fig:bicycle}(d) imply safety.
Initially, the desired controller is used (see Fig.~\ref{fig:bicycle}(b,e)), the yaw angle is zero while the speed increases towards the desired value $v_{\rm d}$ (Fig.~\ref{fig:bicycle}(c,f)), and the CBF $h$ matches the constraint function $\psi$ (Fig.~\ref{fig:bicycle}(d)).
As the vehicle approaches the obstacle, $h$ and $\psi$ deviate, indicating that the situation is safety-critical.
Then, the vehicle responds to the obstacle and the input differs from the desired value (Fig.~\ref{fig:bicycle}(b,e)).
After passing the obstacle, the desired input is used again.
Driving away from the obstacle, $h$ matches $\psi$ again because the vehicle's velocity is safe.
Finally, the yaw angle converges to zero and the speed approaches $v_{\rm d}$ (Fig.~\ref{fig:bicycle}(c,f)) as the vehicle continues to travel in its lane as desired.






\begin{table}
\caption{Parameters of the vehicle control example}
\vspace{-5mm}
\begin{center}
\begin{tabular}{|c | c | c| |c | c | c| |c | c | c|} 
\hline
Par.\! & \!Value\! & \!Unit &
Par.\! & \!Value\! & \!Unit &
Par.\! & \!Value\! & \!Unit \\
\hline
$L$ & $2.5$ & m &
$v_{\rm d}$ & $10$ & m/s &
$\hat{v}_{{\rm d}}$ & $4$ & m/s \\ 
$\xi_{\rm O}$ & $20$ & m &
$K_{\eta}$ & $0.4$ & \!1/(ms)\! &
$\hat{\alpha}_{\rm c}$ & $1$ & 1/s \\
$\eta_{\rm O}$ & $-0.1$ & m &
$K_{\vartheta}$ & $1.75$ & 1/s &
$\sigma$ & \!$0.001$\! & 1/s$^2$ \\ 
$R_{\rm O}$ & $4$ & m &
$K_v$ & $0.3$ & 1/s &
$\mu$ & $1$ & \!m$^2$/s$^2$\! \\ 
$\Gamma_1$ & $1$ & 1 &
$\Gamma_2$ & $0.15$ & \!s$^4$/m$^2$\! &
$\alpha_{\rm c}$ & $5$ & 1/s \\
\hline
\end{tabular}
\end{center}
\vspace{-5mm}
\label{tab:parameters}
\end{table}

\section{CONCLUSION}
\label{sec:concl}

We proposed {\em activated backstepping} to constructively generate control barrier functions (CBFs) for safety-critical control.
We showed that activated backstepping CBFs (ABCs) are valid CBFs that provide advantages over existing high-order, rectified, or backstepping CBFs in certain scenarios.
We highlighted these advantages in a comparative example of an inverted pendulum.
Finally, we demonstrated the efficacy of the proposed ABC in safety-critical vehicle control for collision-free driving.





\bibliographystyle{IEEEtran}
\bibliography{lcss_cdc_2025}	

\end{document}